\theoremstyle{plain}
\newtheorem{theorem}{Theorem}[section]
\theoremstyle{definition}
\newtheorem{conjecture}[theorem]{Conjecture}
\newtheorem{remark}[theorem]{Remark}
\begin{document}
		\title[Combinatorial proof of a result on generalized  overcubic partitions ]{Combinatorial proof of a result on generalized  overcubic partitions and related conjectures  } 
	
	\author{Suparno Ghoshal and Arijit Jana}
	\address{Department of Computer Science, Ruhr University Bochum, Germany}
	\email{suparno.ghoshal@rub.de}
	\address{Department of Mathematics, National Institute of Technology Silchar, Assam 788010, India}
	\email{jana94arijit@gmail.com}
	
	\thanks{2020 \textit{Mathematics Subject Classification.}  11P83, 05A15, 05A17.\\
		\textit{Keywords and phrases. Overcubic partition, Congruences, }}
	
	\begin{abstract}
		Extending Sellers' result, Das et al. recently proved some congruence results for generalized overcubic partitions using theta functions and posed some related conjectures. In this paper, we provide a combinatorial proof of a result in modulo $4$ of Das et al. and confirm some of their conjectures. 
	\end{abstract}
	\maketitle
\section{Introduction}

An integer \textit{partition} of a positive integer $n$ is defined as a non-increasing sequence of positive integers $\lambda=(\lambda_1, \lambda_2, \ldots, \lambda_k)$ which sums to $n$. Each $\lambda_i$ denotes "parts" of a partition.
We denote by $p(n)$, the number of partitions of $n$. For $q \in \mathbb{C}$  and $|q| < 1$,  throughout the notation $f^k_n$ stands for $f^k_n:=\prod_{j\geq 1}(1-q^{jn})^k$ for all integers $n,k$ with $n>0$.
The generating function of $p(n)$ is written as 
\begin{equation}\label{eq0}
\sum_{n\geq 0}p(n)q^n = \prod\limits_{n=1}^{\infty }\frac{1}{1-q^n}= \frac{1}{f_1}.
\end{equation}
Ramanujan was one of the pioneers who came up with several identities and important congruences related to the partition function $p(n)$ for all integers $n \geq 0$ such as these
\begin{align}
p(5n +4) \equiv 0 \pmod{5},\\
p(7n + 5) \equiv 0 \pmod{7},\\
p(11n + 6) \equiv 0 \pmod{11}.
\end{align}
In fact the following identity \eqref{Ramanujan} was stated as the ``Most Beautiful Identity" according to Hardy and MacMahon
\begin{align}\label{Ramanujan}
\sum_{n \geq 0} p(5n +4)q^{n} \equiv 5 \dfrac{(q^{5}; q^{5})_{\infty}^{5}}{(q; q)_{\infty}^{6}}
\end{align}
where $(a; q)_{\infty} = \prod_{k\geq 0} (1 - aq^k)$. \\
The notion of cubic partitions was introduced by Chan \cite{chan10}  following an identity of Ramanujan's cubic continued fractions \cite{Andrews5}. 
A partition of a non-negative integer $n$ whose even parts can be colored with two colors is usually known as a \textit{cubic partition} of weight $n$. We abide by the known literature of cubic partitions and denote it as $a_{2}(n)$ and set $a_{2}(0) = 1$. For any counting problem it often makes sense to see with examples as to how the counting works. We provide the case for $n = 4$ and we make use of the colors red and green to color the parts, further we assume that the odd parts of a partition can only be colored with the color red, then the number of cubic partitions amounts to $9$ in the following manner
\[
4_{\text{R}}, 4_{\text{G}}, 3_{\text{R}} + 1_{\text{R}}, 2_{\text{R}} + 2_{\text{R}}, 2_{\text{G}} + 2_{\text{G}}, 2_{\text{R}} + 2_{\text{G}}, 2_{\text{R}} + 1_{\text{R}} + 1_{\text{R}}, 2_{\text{G}} + 1_{\text{R}} + 1_{\text{R}}, 1_{\text{R}} + 1_{\text{R}} + 1_{\text{R}} + 1_{\text{R}}.
\]
The generating function of $a_{2}(n)$ is pretty evident from the way it's defined combinatorially
\begin{align}
\sum_{n \geq 0} a_{2}(n)q^{n} = \prod_{j \geq 1}\dfrac{1}{(1 - q^{j})(1 - q^{2j})} = \dfrac{1}{f_{1}f_{2}}.
\end{align}
Chan further was able to come up with an identity similar to the one given by Ramanujan \eqref{Ramanujan}
\begin{align}
\sum_{n \geq 0} a_{2}(3n +2)q^{n} = 3 \dfrac{(q^{3}; q^{3})_{\infty}^{3}(q^{6}; q^{6})_{\infty}^{3}}{(q; q)_{\infty}^{4}(q^{2}; q^{2})_{\infty}^{4}}.
\end{align}
This straightaway gives the congruence $a_{2}(3n +2) \equiv 0 \pmod{3}$. In the literature there has been quite a lot of work that has been carried out since then. Curious readers can go through these works by mathematicians like Chan \cite{chan10b}, Xiong \cite{xiong11}, Sellers \cite{sellers14}, Gireesh \cite{gireesh17}, Yao \cite{yao19}, Hirschhorn \cite{h20}, Merca \cite{merca22}, Baruah \& Das \cite{baruah22}, and Buragohain \& Saikia \cite{buragohain24}, who all studied the arithmetic properties of cubic partitions.\\
Generalised cubic partitions and generalised cubic overpartitions have been the subject of recent research by Amdeberhan et al. \cite{Amde25}, Das et al. \cite{Das25}. A generalized cubic partition of weight $n$ is a partition of $n\geq 1$ wherein each even part may appear in $c\geq 1$ different colors. In \cite{Amde25},  they denote  the number of such generalized cubic partitions of $n$ by $a_c(n)$, with $a_c(0):=1$ for all $c\geq 1$.
The generating function for $a_c(n)$ is given by
\begin{equation}\label{p3eq1}
\sum_{n\geq 0}a_c(n)q^n=\frac{1}{f_1f_2^{c-1}}.
\end{equation}
Setting $c=1$ yields the ordinary integer partitions, and setting $c=2$ yields the \textit{cubic partitions} of $n$.\\
We will first discuss briefly about overpartitions before getting into the details of generalized overcubic partitions. Corteel and Lovejoy introduced the generating function for counting the number of overpartitions of a positive integer $n$ in their work \cite{corteel4}
\begin{align}
    \sum_{n \geq 0} \overline{p}(n)q^{n} = \dfrac{f_{2}}{f_{1}^{2}}.
\end{align}
Combinatorially an overpartition of a positive integer $n$ means a partition of $n$ where the first occurrence of each part size may be overlined. For example, $\overline{p}(3)=8$, counts the following overpartitions
$$(3),\, (\overline{3}),\, (2,1),\, (\overline{2},1),\ (2,\overline{1}),\, (\overline{2},\overline{1}),\, (1,1,1),\, (\overline{1},1,1).$$
Over the years, people have carried out extensive research in the area of overpartitions, some of which are cited here \cite{ h5, kim1, kim2, xia1, xia2}.\\
Amdeberhan et al. \cite{Amde25} also considered \textit{generalized overcubic partitions}. These are defined in the same way as generalized cubic partitions and denoted by $\bar a_c(n)$. The generating function for $\bar a_c(n)$ is given by
\begin{equation} \label{p3eq2}
\sum_{n\geq 0}\bar a_c(n)q^n=\frac{f_4^{c-1}}{f_1^2f_2^{2c-3}}.
\end{equation}

New congruences related to the generalized overcubic partitions were provided by them \cite{Amde25} along with their proofs which involved modular forms and elementary theta functions. Guadalupe \cite{gadulpade25} followed up by giving more congruences for the $a_c(n)$ function. In a more recent work by Das et al. \cite{Das25}, they extended the congruence list by providing more congruences for both $a_c(n)$ and $\bar a_c(n)$.  \\

\subsection{Overview of this Paper:} The main goal of this paper is to offer combinatorial proof for the congruences associated with the generalized overcubic partitions $\bar{a}_{c}(n)$ modulo $4$ and also give proofs of some of the conjectures made by Das et al. in \cite{Das25}. We begin by providing the combinatorial proof of the following congruences in Section \ref{sec2}.

\begin{theorem}\cite[Theorem 1.4]{Das25}\label{p3thm1}
	For all $n \geq 1$, we have
	$$
	\bar a_c(n) \equiv
	\begin{cases} 
	2 \pmod{4} & \text{if } n = k^2,  \, k \in \mathbb{Z}, \\ 
	2(c+1) \pmod{4} & \text{if } n = 2k^2,  \, k \in \mathbb{Z}, \\ 
	0 \pmod 4 & \text{otherwise}.
	\end{cases}
	$$
\end{theorem}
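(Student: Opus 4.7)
My strategy is to factor the generating function so that $\bar{a}_c(n)$ becomes a convolution of $c$ overpartition counts, and then to apply the known mod-$4$ behaviour of $\overline{p}$ slot-by-slot. A direct algebraic rewriting of \eqref{p3eq2} yields
\begin{equation*}
\sum_{n\geq 0}\bar{a}_c(n)\,q^n \;=\; \frac{f_2}{f_1^2}\cdot\left(\frac{f_4}{f_2^2}\right)^{c-1}.
\end{equation*}
Since $f_2/f_1^2=\sum_{m\geq 0}\overline{p}(m)q^m$ is the overpartition generating function, and $f_4/f_2^2=\sum_{m\geq 0}\overline{p}(m)q^{2m}$ enumerates overpartitions whose parts are all even (by halving each part), this is the combinatorial identity
\begin{equation*}
\bar{a}_c(n)\;=\;\sum_{m_0+2m_1+\cdots+2m_{c-1}=n}\overline{p}(m_0)\,\overline{p}(m_1)\cdots\overline{p}(m_{c-1}).
\end{equation*}
So a generalised overcubic partition of $n$ is encoded as a $c$-tuple of overpartitions $(\bar{\mu}^{(0)},\bar{\mu}^{(1)},\ldots,\bar{\mu}^{(c-1)})$ with $|\bar{\mu}^{(0)}|+2|\bar{\mu}^{(1)}|+\cdots+2|\bar{\mu}^{(c-1)}|=n$.

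Next, I would invoke (or, for self-containedness, reprove via a self-conjugation involution on overpartitions) the classical congruence
\begin{equation*}
\overline{p}(m)\;\equiv\;
\begin{cases}2\pmod{4}&\text{if }m=k^2\text{ for some }k\geq 1,\\ 0\pmod{4}&\text{otherwise},\end{cases}\qquad m\geq 1,
\end{equation*}
together with $\overline{p}(0)=1$. Each factor in the convolution is therefore $\equiv 0$, $1$, or $2\pmod{4}$; the product $\overline{p}(m_0)\cdots\overline{p}(m_{c-1})$ is congruent to $2^r\pmod{4}$, where $r$ is the number of indices with $m_i\neq 0$. This already vanishes modulo $4$ whenever $r\geq 2$.

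Hence, modulo $4$, the only surviving tuples have exactly one nonzero $m_i$, and that entry must be a positive square. The case $m_0=k^2\neq 0$ with all other $m_i=0$ forces $n=k^2$ and contributes $2$; the case $m_j=k^2\neq 0$ with $j\geq 1$ forces $n=2k^2$, and the $c-1$ choices of the distinguished slot $j$ together contribute $2(c-1)\equiv 2(c+1)\pmod{4}$. Using the elementary fact that for $n\geq 1$ the sets $\{k^2\}_{k\geq 1}$ and $\{2k^2\}_{k\geq 1}$ are disjoint (else $\sqrt{2}$ would be rational), reading off the coefficient of $q^n$ delivers the three cases of Theorem \ref{p3thm1}.

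The main obstacle is supplying the combinatorial mod-$4$ congruence for $\overline{p}$ itself. I expect to construct an involution on overpartitions of $m$ pairing almost all of them into orbits of size $4$, combining overline-toggling on a distinguished part with an exchange of the largest non-overlined part and the smallest overlined part; the fixed configurations should amount to exactly two self-conjugate overpartitions for each square $m=k^2$, producing the residue $2\pmod{4}$. Once this lemma is in place, the rest of the proof is a transparent convolution calculation, and the $(c-1)$-fold slot structure is precisely what produces the factor $c-1$ (equivalently $c+1$, modulo $4$) at twice-squares.
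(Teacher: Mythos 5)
Your proof is correct, but it takes a genuinely different route from the paper. You factor the generating function \eqref{p3eq2} as $\frac{f_2}{f_1^2}\cdot\bigl(\frac{f_4}{f_2^2}\bigr)^{c-1}$ (the algebra checks out) and reduce the theorem to a convolution of $c$ copies of the overpartition function, one in $q$ and $c-1$ in $q^2$, then feed in the classical congruence $\overline{p}(m)\equiv 2\pmod 4$ for $m$ a positive square and $\equiv 0\pmod 4$ otherwise; the bookkeeping of which tuples survive modulo $4$, the disjointness of squares and twice-squares, and the identification $2(c-1)\equiv 2(c+1)\pmod 4$ are all handled correctly. The paper instead works directly with the set of generalized overcubic partitions of $n$: it splits them by the number of distinct (colored) part sizes, observes that $r\geq 2$ distinct parts contribute $2^r\equiv 0\pmod 4$ overlinings, and reduces the single-part case to the parity of $\tau_{\mathrm{odd}}(n)$ and $c\cdot\tau_{\mathrm{even}}(n)$. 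Your approach buys modularity --- once the $\overline{p}\bmod 4$ lemma is granted, the rest is a two-line convolution --- and it makes the source of the coefficient $c-1$ at twice-squares transparent (one slot per extra color); the paper's approach is self-contained and avoids appealing to the overpartition congruence. One remark: your closing plan to prove the $\overline{p}\bmod 4$ lemma by a bespoke involution is more machinery than needed and is the only genuinely unfinished piece of your write-up. That lemma follows in two lines from the same idea the paper uses globally: $\overline{p}(m)=\sum_{\lambda\vdash m}2^{\nu(\lambda)}$ with $\nu$ the number of distinct part sizes, so $\overline{p}(m)\equiv 2\,\tau(m)\pmod 4$, and $\tau(m)$ is odd exactly when $m$ is a perfect square. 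Replacing the involution sketch by this observation (or by a citation to Hirschhorn--Sellers) makes your argument complete.
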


\begin{remark}
	 Theorem \ref{p3thm1}, which generalized a result of Sellers \cite[Theorem 2.5]{sellers14}, was proved by Das et. al \cite{Das25} by using theta function identities. In this paper we provide a combinatorial argument for proving these generalized set of congruences.
\end{remark}
Section \ref{sec3} consists of elementary proofs for the following conjectures proposed in the paper \cite{Das25}. It is important to note that we needed to provide only elementary proofs of those congruences modulo $3$ as the rest follow immediately from the general result of Section \ref{sec2}.
\begin{conjecture}\cite[Conjecture 7.3]{Das25}\label{p3conj1}
	For all $n\geq 0$ and $i\geq 1$, we have
	\begin{align*}
	\bar a_{3i+2}(9n+2)&\equiv 0 \pmod {6},\\
	\bar a_{3i+2}(9n+5)&\equiv 0 \pmod{6},\\
	\bar a_{3i+2}(9n+8)&\equiv 0 \pmod{6},\\
    \bar a_{9i+5}(9n+3)&\equiv 0 \pmod{12},\\
	\bar a_{9i+8}(9n+3)&\equiv 0 \pmod{12}.\\
	\end{align*}
\end{conjecture}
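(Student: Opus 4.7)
The plan is to split each congruence into its coprime prime-power components, handle the mod-$2$ (respectively mod-$4$) part via Theorem~\ref{p3thm1}, and reduce the mod-$3$ part to a short list of explicit base cases by iterating the Frobenius congruence $f_k^{3}\equiv f_{3k}\pmod 3$. Theorem~\ref{p3thm1} already shows that $\bar a_c(n)$ is always even, so the mod-$2$ part of the first three congruences is immediate. For the mod-$4$ part of the last two, note that squares modulo $9$ lie in $\{0,1,4,7\}$, so $9n+3$ is never a square; and $9n+3$ is odd, so it is never twice a square either. Theorem~\ref{p3thm1} then yields $\bar a_c(9n+3)\equiv 0\pmod 4$ for every $c\ge 1$, which takes care of the mod-$4$ part. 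All subsequent work is aimed at divisibility by $3$.

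For the first three congruences I would write
\begin{equation*}
\sum_{n\ge 0}\bar a_{3i+2}(n)\,q^n \;=\; \frac{f_4}{f_1^{2}f_2}\cdot\frac{f_4^{\,3i}}{f_2^{\,6i}} \;\equiv\; \left(\sum_{n\ge 0}\bar a_2(n)\,q^n\right)\cdot\frac{f_{12}^{\,i}}{f_6^{\,2i}}\pmod 3,
\end{equation*}
using $f_4^{3}\equiv f_{12}$ and $f_2^{6}\equiv f_6^{2}$. The secondary factor is a power series in $q^{6}$; hence for $r\in\{2,5,8\}$ the coefficient of $q^{9n+r}$ in the product is a $\mathbb Z$-linear combination of values $\bar a_2(m)$ with $m\equiv r\equiv 2\pmod 3$. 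I would then invoke the classical congruence $\bar a_2(3m+2)\equiv 0\pmod 3$ of Kim on the overcubic partition function---which in any case admits a short elementary proof via the $3$-dissection of $f_4/(f_1^{2}f_2)$ arising from Euler's pentagonal number theorem---to annihilate every term and conclude.

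For the last two congruences the analogous manipulation, using $f_4^{9}\equiv f_{36}$ and $f_2^{18}\equiv f_{18}^{2}$ in place of $f_4^{3}\equiv f_{12}$ and $f_2^{6}\equiv f_6^{2}$, produces
\begin{equation*}
\sum_{n\ge 0}\bar a_{9i+5}(n)q^n\equiv\left(\sum_{n\ge 0}\bar a_5(n)q^n\right)\frac{f_{36}^{\,i}}{f_{18}^{\,2i}}\pmod 3\quad\text{and}\quad \sum_{n\ge 0}\bar a_{9i+8}(n)q^n\equiv\left(\sum_{n\ge 0}\bar a_8(n)q^n\right)\frac{f_{36}^{\,i}}{f_{18}^{\,2i}}\pmod 3,
\end{equation*}
with the secondary factor now in $\mathbb Z[[q^{18}]]$. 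Since $9n+3-18k\equiv 3\pmod 9$ for every $k$, the problem reduces to the two base cases $\bar a_5(9n+3)\equiv 0\pmod 3$ and $\bar a_8(9n+3)\equiv 0\pmod 3$, which do \emph{not} follow directly from Kim's congruence since the target residue is now $0\pmod 3$ rather than $2\pmod 3$.

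The main obstacle is establishing these two base cases. My plan is to simplify their generating functions modulo $3$ using $f_1^{-2}\equiv f_1/f_3$, $f_2^{-1}\equiv f_2^{2}/f_6$, $f_6^{-2}\equiv f_6/f_{18}$ and the like, yielding
\begin{equation*}
\sum_{n\ge 0}\bar a_5(n)q^n\equiv \frac{f_1 f_2^{2} f_4 f_{12}}{f_3 f_{18}}\pmod 3,\qquad \sum_{n\ge 0}\bar a_8(n)q^n\equiv \frac{f_1 f_2^{2} f_4 f_6 f_{12}^{\,2}}{f_3 f_{18}^{\,2}}\pmod 3,
\end{equation*}
and then to insert the standard $3$-dissections of $f_1$, $f_2$, and $f_4$ coming from Euler's pentagonal number theorem. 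Collecting the resulting terms according to the residue of the exponent modulo $9$ and verifying that the $q^{9n+3}$-contribution vanishes in $\mathbb F_3$ is a direct but lengthy computation, and forms the technical core of the argument.
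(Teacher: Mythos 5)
Your handling of the $2$- and $4$-divisibility is correct and is exactly what the paper does: $\bar a_c(n)$ is always even by Theorem~\ref{p3thm1}, and $9n+3$ is neither a square nor twice a square, so the last two cases get a factor of $4$ for free. Your reduction of the mod-$3$ part to the three base cases $c=2,5,8$ via $f_k^3\equiv f_{3k}\pmod 3$ is a nice structural observation that the paper does not make explicit (the paper instead carries the parameter $i$ through the whole dissection, which costs nothing since the $i$-dependent factor is a series in $q^3$ or $q^6$ anyway). The first three congruences are then settled: the generating-function manipulations check out, and $\bar a_2(3m+2)\equiv 0\pmod 6$ is indeed a known theorem of Kim, with an elementary proof in Sellers \cite{sellers14}, so citing it legitimately closes that case.

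The gap is in the last two congruences. You correctly reduce them to $\bar a_5(9n+3)\equiv 0\pmod 3$ and $\bar a_8(9n+3)\equiv 0\pmod 3$, and your mod-$3$ simplifications of the two generating functions are right, but the step you defer --- inserting pentagonal-number $3$-dissections of $f_1$, $f_2$, $f_4$ and checking that the $q^{9n+3}$ coefficient cancels in $\mathbb F_3$ --- is the entire content of these two congruences, and you give no evidence that the cancellation occurs. Since the generalized pentagonal numbers meet every residue class mod $3$, each of $f_1,f_2,f_4$ has a full three-component dissection, so your proposed computation is a two-stage extraction with many cross terms whose vanishing is not apparent in advance. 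The paper sidesteps this entirely: it recognizes $f_1f_4/f_2=\psi(-q)$, applies the two-term dissection $\psi(-q)=f(-q^3,q^6)-q\psi(-q^9)$, then uses $f(-q,q^2)=\varphi(q^3)/\chi(q)$ and $\psi(q^2)=f(q^6,q^{12})+q^2\psi(q^{18})$, after which the series is visibly supported away from exponents $\equiv 1\pmod 3$, so no cancellation ever needs to be verified. Your reduction to the base cases, combined with the paper's theta-function treatment of $c=5$ and $c=8$ (its argument is exactly the $i=0$ instance), would yield a complete proof; as written, the proposal leaves the decisive verification undone.
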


\begin{theorem}\label{p3thm2}
		For all $n\geq 0$ and $i\geq 1$, we have
	\begin{align}
	\bar a_{3i+2}(3n+2)&\equiv 0 \pmod {6},\label{p3cong1}\\
	\bar a_{9i+5}(9n+3)&\equiv 0 \pmod{12},\label{p3cong2}\\
	\bar a_{9i+8}(9n+3)&\equiv 0 \pmod{12}.\label{p3cong3}
	\end{align}
\end{theorem}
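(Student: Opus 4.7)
The strategy is to decompose each modulus via the Chinese Remainder Theorem, dispatch the $2$-power factor using Theorem~\ref{p3thm1}, and reduce the mod~$3$ part to a small set of base cases via an elementary induction.

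Since $6=2\cdot 3$ and $12=4\cdot 3$, each assertion splits into an ``even'' part and a mod~$3$ part. For the even part of \eqref{p3cong1}, I observe that $3n+2$ cannot be a perfect square (as $2$ is a non-residue modulo $3$). Theorem~\ref{p3thm1} then gives either $\bar a_{3i+2}(3n+2)\equiv 0\pmod 4$, or, when $3n+2=2k^2$, $\bar a_{3i+2}(3n+2)\equiv 2(3i+3)\pmod 4$; both values are even. For \eqref{p3cong2}--\eqref{p3cong3}, $9n+3\equiv 3\pmod 9$ is not a square, and $9n+3=2k^2$ forces $3\mid k$, whence $3n+1=6\ell^2$, contradicting $3n+1\equiv 1\pmod 3$. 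Theorem~\ref{p3thm1} then yields $\bar a_c(9n+3)\equiv 0\pmod 4$ for every $c$.

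The mod~$3$ part relies on the elementary congruence $f_k^3\equiv f_{3k}\pmod 3$. Equation \eqref{p3eq2} gives
\[
\sum_{n\geq 0}\bar a_{c+3}(n)q^n=\frac{f_4^3}{f_2^6}\sum_{n\geq 0}\bar a_c(n)q^n\equiv\frac{f_{12}}{f_6^2}\sum_{n\geq 0}\bar a_c(n)q^n\pmod 3,
\]
where $f_{12}/f_6^2$ is a power series in $q^6$. Iterating three times replaces the multiplier by $f_{36}/f_{18}^2$, a power series in $q^{18}$. Convolving with a series in $q^6$ preserves the residue class of each exponent modulo $3$, and convolving with a series in $q^{18}$ preserves exponents modulo~$9$; an induction on $i$ therefore reduces the three mod~$3$ claims to the base cases $\bar a_2(3n+2)\equiv 0\pmod 3$, $\bar a_5(9n+3)\equiv 0\pmod 3$, and $\bar a_8(9n+3)\equiv 0\pmod 3$.

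Establishing these base cases is the main obstacle. Using $1/f_1^2\equiv f_1/f_3\pmod 3$, the relevant generating functions reduce to
\[
\frac{f_1f_4}{f_2f_3},\qquad \frac{f_1f_4f_{12}}{f_2f_3f_6^2},\qquad \frac{f_1f_4f_{12}^2}{f_2f_3f_6^4}\pmod 3.
\]
I would carry out an explicit $3$-dissection via Jacobi's triple-product expansion of $f_1$, exploiting the fact that the pentagonal number $n(3n-1)/2$ is congruent to $0,1,2\pmod 3$ precisely when $n\equiv 0,1,2\pmod 3$. Since $f_3$, $f_6$, $f_{12}$ contribute only exponents divisible by $3$, $6$, $12$ respectively, the residue of the $q$-exponent modulo $3$ (or $9$) is governed entirely by the $f_1f_4/f_2$ factor; careful bookkeeping should then force the $q^{3n+2}$-component of the first expression and the $q^{9n+3}$-components of the other two to vanish identically modulo~$3$. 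This dissection and the cancellation verification form the computational heart of the argument.
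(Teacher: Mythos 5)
Your overall architecture is sound and agrees with the paper's: the split $6=2\cdot3$, $12=4\cdot3$, and the disposal of the even part via Theorem \ref{p3thm1} (checking that $3n+2$ is never a square and that $9n+3$ is never a square or twice a square, so the residues $2$ and $2(c+1)$ mod $4$ either do not occur or are at least even) is exactly what the paper does. Your inductive reduction of the mod-$3$ claims to the base cases $c=2,5,8$ via $f_4^3/f_2^6\equiv f_{12}/f_6^2\pmod 3$ is correct and is a tidy alternative to the paper's uniform treatment: the paper instead carries a prefactor such as $f_{12}^{i+1}/f_6^{2i+1}$ (a series in $q^6$) through the whole computation for general $i$, which plays the same role as your induction.

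However, the base cases are not proved, and they are where all the content of the theorem lives. Your proposed route --- expanding $f_1$ by the pentagonal number theorem and doing ``careful bookkeeping'' --- does not work as described, because the exponent residues are \emph{not} governed by $f_1$ and $f_4$ alone: the factor $1/f_2$ (equivalently $f_2^2/f_6$ modulo $3$) contributes exponents in every residue class mod $3$ and admits no pentagonal-type single-sum expansion. What is actually needed is a $3$-dissection of the combined factor $f_1f_4/f_2=\psi(-q)$, which the paper obtains from $\psi(q)=f(q^3,q^6)+q\psi(q^9)$, i.e.\ \eqref{3-dissec-psi}; from this the first-level extraction is immediate. For the two mod-$9$ base cases a \emph{second} layer of dissection is required after extracting $q^{3n}$ and replacing $q^3$ by $q$: the paper uses the identity $f(-q,q^2)=\varphi(q^3)/\chi(q)$ to convert the surviving piece into $\varphi(q^3)\psi(q^2)$ and then dissects $\psi(q^2)$ once more. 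For the first congruence the paper in fact uses yet another ingredient, Toh's $3$-dissection \eqref{eq32} of $f_2/(f_1f_4)$, together with the cancellation $B^2+2AC\equiv 0\pmod 3$ among its components. None of these identities, nor any substitute for them, appears in your proposal, so the ``computational heart'' you defer is a genuine gap rather than a routine verification; the surrounding reductions, while correct, are the easy part.
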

\begin{remark}
   The first three congruences of Conjecture \ref{p3conj1} immediate from \eqref{p3cong1}.
\end{remark}

\section{Combinatorial Proof of Theorem \ref{p3thm1}}\label{sec2}
\begin{proof}[Proof of Theorem \ref{p3thm1}]

	Let's decompose the function $\overline{a}_{c}(n)$ in the following way:
	\begin{itemize}
		\item a set of partitions containing only single unique part, what we mean by that is $\lambda = (\lambda_{1}^{k})$ such that $k\cdot \lambda_{1} = n$. Let's denote this set as $P_{1}(n)$.
		\item remaining are those partitions which have two or more distinct parts allowing possible multiplicities. We denote this class as $P_{\geq 2}(n)$. Mathematically, if we take a partition $\lambda = (\lambda_1^{k_{1}}, \lambda_2^{k_{2}}, \cdots, \lambda_r^{k_{r}})$, where $k_{i}, ( \text{for}~ i \in \{1,\dots, r\} )$ are the multiplicities of each unique part $\lambda_{i}$, then $\lambda \in P_{\geq 2}(n)$ iff $r \geq 2$.  
	\end{itemize}
	We can now write $\overline{a}_{c}(n)$ in terms of the sum of the cardinality of the two sets $P_{1}(n)$ and $P_{\geq 2}(n)$
	\begin{align}
	\overline{a}_{c}(n) = |P_{1}(n)| + |P_{\geq 2}(n)|
	\end{align}

    Now we try and find the cardinality of the set $P_{\geq 2}(n)$. We claim that we can express the cardinality of the set $P_{\geq 2}(n)$ in the following way

\begin{equation}\label{eq:overpartition}
    \mid P_{\geq 2}(n)\mid = \sum_{r = 2}^{\max f(\lambda)} 2^{r} \cdot \chi(r),
\end{equation}

where \( \chi(r) \) denotes the number of partitions of \( n \) into exactly \( r \) distinct parts, and \( f(\lambda) \) is a function that maps a partition \( \lambda \) of \( n \) to the number of distinct parts in \( \lambda \).

To understand why equation~\eqref{eq:overpartition} holds, consider a partition with \( r \) distinct parts. For each such partition, the number of corresponding overpartitions is given by the number of ways to choose which of the \( r \) parts are overlined. This is:

\begin{equation}
    \binom{r}{0} + \binom{r}{1} + \binom{r}{2} + \cdots + \binom{r}{r} = 2^{r},
\end{equation}

where \( \binom{r}{i} \) represents the number of ways to overline exactly \( i \) out of the \( r \) distinct parts.

Hence from \eqref{eq:overpartition}, we can easily see that $\mid P_{\geq 2}(n) \mid \equiv 0 \pmod{4}$. We are now only left with checking the cardinality of the set $P_{1}(n)$.

	The set $P_{1}(n)$ clearly consists only of partitions whose parts are divisors of $n$, and its cardinality should reflect this constraint. Hence, we can express $|P_{1}(n)|$ by noting that a partition consisting of a single unique part contributes two overpartitions, as follows:
	\begin{align}\label{comb51}
	|P_{1}(n)| = 2 \times (\mid \kappa_{1}(n) \mid + \mid \kappa_{2}(n) \mid)
	\end{align}
	where the $\kappa_{1}(n)$ denotes the set partitions consisting of single unique odd part and $\kappa_{2}(n)$ is the set of partitions consisting of single unique even part. Since we are considering overcubic partitions we know that the odd parts can only be colored using a single color. On the other hand the even parts can be colored using $c$ different colors. 
    So we further divide the set $\kappa_{\text{2}}(n)$ into two sets $\kappa_{21}(n)$ and $\kappa_{22}(n)$
    \begin{align}\label{eq:ov2}
        |P_{1}(n)| = 2 \times (\mid \kappa_{1}(n) \mid + \mid \kappa_{21}(n)\mid ) + \mid \kappa_{22}(n) \mid
    \end{align}
    Here $\kappa_{21}(n)$ denotes the set of partitions consisting of monochromatic single even part, while $\kappa_{22}(n)$ is the set of partitions consisting of polychromatic single even parts. In case we have partitions consisting of polychromatic single even parts, one can treat them as partitions having more than one unique part. Then with the same logic as in \eqref{eq:overpartition} we can write 
    \begin{equation}\label{eq:over1}
    \mid \kappa_{22}(n)\mid = \sum_{e = 2}^{c} 2^{e} \cdot \tilde{\chi}(e),
\end{equation}
Here $\tilde{\chi}(e)$ counts the number of partitions consisting of single  even parts colored with $e \geq 2$ different colors. From \eqref{eq:over1} we get that $\mid\kappa_{22}\mid \equiv 0 \pmod{4}$. This leaves us with checking only the cardinality of the set $\kappa_{21}(n)$. We claim the following for the cardinality of $\kappa_{21}(n)$
\begin{align}\label{eq:over3}
    \mid \kappa_{21}(n) \mid = \tau_{\text{even}}(n) \cdot c
\end{align}
where $\tau_{\text{even}}(n)$ counts the number of even divisors of $n$.
The above equation \eqref{eq:over3} holds as we are doing monochromatic coloring of partitions consisting of single even part.\\
Now we can re-write equation \eqref{eq:ov2} as following
\begin{align}\label{eq:ov3b}
        |P_{1}(n)| \equiv 2 \times (\mid \kappa_{1}(n) \mid + \tau_{even}(n) \cdot c)
    \end{align}
Again we know that $\mid\kappa_{1}(n)\mid$ is nothing but the number of odd divisors of $n$. Hence we can write \eqref{eq:ov3b} as
\begin{align}\label{eq:ov3}
        |P_{1}(n)| \equiv 2 \times (\tau_{odd}(n) + \tau_{even}(n) \cdot c)
    \end{align}
    $\tau_{odd}(n)$ is the number of odd divisors of $n$. From here on we check the following three cases of Theorem \ref{p3thm1}
    
        { \bf Case 1 ( If $n = k^2$):}  Let's write the prime factorization of $n$
	\begin{align}\label{primefactor}
	n = \prod_{j = 1}^{s} p_{j}^{\alpha_{j}}
	\end{align}
    Since $n$ is a square, $\forall j \in \{1, \ldots, s\}\text{, } \alpha_{j} \equiv 0 \pmod{2}$. Now if we have $n$ as an odd square then the proof follows as $\tau_{even}(n) = 0$, and $\tau_{odd}(n) = \prod_{j = 1}^{s} (\alpha_{j} + 1) \equiv 1 \pmod{2}$. For the other case when $n$ is an even square, w.l.o.g assume that $p_{1} = 2$. Then we have $\tau_{odd}(n) = \prod_{j = 2}^{s} (\alpha_{j} + 1) \equiv 1 \pmod{2}$ and $\tau_{even}(n) = \alpha_{1}\cdot \prod_{j = 2}^{s} (\alpha_{j} + 1) \equiv 0 \pmod{2}$. Combining the above two sub-cases and equation \eqref{eq:ov3}, we can see that $\mid P_{1}(n) \mid \equiv 2 \pmod{4}$, whenever $n$ is a square. \\
    { \bf Case 2 ( If $n = 2 \cdot k^2$):} Now, we consider $n = 2 \cdot k^2 \text{ where } k \in \mathbb{N}$. This case is equivalent to the case that $n$ is of the form $2^{\alpha_{1} + 1}\cdot \tilde{k}^{2}, \text{ where } \alpha_{1}$ is an even non-negative integer and $\tilde{k}$ is a non-negative odd integer. It can be checked easily that for this case $\tau_{even}(n) = (\alpha_{1} + 1)\prod_{j = 2}^{s}(\alpha_{j} + 1) \equiv 1 \pmod{2}$, assuming $\tilde{k}^{2} = \prod_{j = 2}^{s} p_{j}^{\alpha_{j}}$. This will imply that $\tau_{even} \cdot c \equiv c \pmod{2}$. We can also check that $\tau_{odd}(n) \equiv 1 \pmod{2}$. So we can write equation \eqref{eq:ov3} as follows:
    \begin{align}\label{comb91}
	\mid P_{1}(n) \mid \equiv 2(1 +  c) \pmod{4}
	\end{align}
   { \bf  Case 3 (Otherwise):} This case is equivalent to saying $n$ is of the following form. 
    \begin{align}\label{primefactor1}
	n = 2^{\alpha_{1}}\prod_{j = 2}^{s} p_{j}^{\alpha_{j}}
	\end{align}
    where for some $ j \in \{2, \ldots, s\}, \text{ such that } \alpha_{j} \equiv 1 \pmod{2}$. 
    Again we first check that $\tau_{odd}(n) = \prod_{j = 2}^{s} (1 + \alpha_{j}) \equiv 0 \pmod{2}$ and $\tau_{even}(n) = \alpha_{1} \prod_{j = 2}^{s} (1 + \alpha_{j}) \equiv 0 \pmod{2}$. Using the above stated facts and equation \eqref{eq:ov3} we get
    \begin{align}
        \mid P_{1}(n) \mid \equiv 0 \pmod{4}.
    \end{align}
    The proof is now complete.
    \end{proof}

\section{Proof of Theorem \ref{p3thm2}} \label{sec3}

In this section, first we recall some results on Ramanujan's theta function to prove Theorem \ref{p3thm2}.  Following  \cite[Equation 1.2.1]{Spirit}, one can define the Ramanujan's theta function $f(a, b)$ as 
\begin{align*}
f(a,b) := \sum_{k=-\infty}^{\infty} a^{\frac{k(k+1)}{2}}b^{\frac{k(k-1)}{2}}, \quad |ab| < 1.
\end{align*}
One can see from \cite[ p.35, p. 350 and p. 49]{bcb391} that the following particular results hold

	\begin{align}
&	\psi(q) := f(q,q^3) =\dfrac{f_2^2}{f_1},\label{psi}\\
& \psi(-q)=\frac{f_1f_4}{f_2}.\label{psi(-q)}\\
&	\psi(q)=f(q^3,q^6)+q\psi(q^9).\label{3-dissec-psi}\\
&	f(-q,q^2)=\frac{\varphi(q^3)}{\chi(q)}, \label{f(-q,q^2)}
	\end{align}
	where
\begin{align}
&\chi(q):=\dfrac{f_2^2}{f_1f_4}, \label{chiq}\\
&\varphi(q) := f(q,q) =\dfrac{f_2^5}{f_1^2f_4^2}.\label{varphi}
\end{align}
The following identity  
\begin{align}
\dfrac{f_2}{f_1f_4} &= \dfrac{f_{18}^9}{f_3^2f_9^3f_{12}^2f_{36}^3}+q\dfrac{f_6^2f_{18}^3}{f_3^3f_{12}^3}+q^2\dfrac{f_6^4f_9^3f_{36}^3}{f_3^4f_{12}^4f_{18}^3}.\label{eq32}
\end{align}
was developed by Toh \cite[Lemma 2.1, (2.1c)]{toh} as a $3$-dissection of the generating function for the number of partitions of $n$ with distinct odd parts.

\begin{proof}[Proof of Equation \eqref{p3cong1}]
	Putting $c=3i+2$ in \eqref{p3eq2}, we have
	\begin{align}\label{p3eq3}
\sum_{n\geq 0}\bar a_{3i+2}(n)q^n&=\frac{f_{4}^{3i+1}}{f_1^2f_{2}^{6i+1}} \notag \\
	&\equiv \frac{f_{12}^{i+1}}{f_{6}^{2i+1}} \bigg(\frac{f_2}{f_1 f_4}\bigg)^2 \pmod3.
	\end{align}
	
	Now, using  \eqref{eq32} in \eqref{p3eq3}, we see that   the terms of the form  $q^{3n+2}$ in \eqref{p3eq3} are following:
	\begin{align*}
& \frac{f_{12}^{i+1}}{f_{6}^{2i+1}}\bigg(q^2\frac{f_{6}^{4} f_{18}^{6}}{f_{3}^{6} f_{12}^{6}}+ 2 q^2\frac{f_{6}^{4} f_{18}^{6}}{f_{3}^{6} f_{12}^{6}}\bigg)
	\equiv 0 \pmod3 .
	\end{align*}
	As a result, 	\begin{align}\label{p3eq7}
	\sum_{n\geq 0}\bar a_{3i+2}(3n+2)q^n&=0 \pmod3.
	\end{align}
Again, Theorem \ref{p3thm1} implies 
\begin{align} \label{p3eq34}
    \overline{a}_{3i + 2}(3n + 2) \equiv 0 \pmod{2}.
\end{align}
Combining \eqref{p3eq34} and \eqref{p3eq7}, we complete the proof.	
\end{proof}

\begin{proof}[Proof of Equation \eqref{p3cong2}]
	Putting $c=9i+5$ in \eqref{p3eq2}, we have
	\begin{align}\label{p3eq4}
	\sum_{n\geq 0}\bar a_{9i+5}(n)q^n&=\frac{f_{4}^{9i+4}}{f_1^2f_{2}^{18i+7}} \notag \\
	&\equiv \frac{f_{12}^{3i+1}}{f_{6}^{6i+2} f_3} \frac{f_1 f_4}{f_2} \pmod3\notag \\
	 	&= \frac{f_{12}^{3i+1}}{f_{6}^{6i+2} f_3}\psi(-q)\notag \\
	 		&= \frac{f_{12}^{3i+1}}{f_{6}^{6i+2} f_3} \bigg( f(-q^3,q^6)-q\psi(-q^9)\bigg).
	\end{align}
	
 Extracting the terms $q^{3n}$ from \eqref{p3eq4} and then replacing $q^3$ by $q$ we get 
 	\begin{align}\label{p3eq5}
 \sum_{n\geq 0}\bar a_{9i+5}(3n)q^n
 & =\frac{f_{4}^{3i+1}}{f_{2}^{6i+2} f_1}  f(-q,q^2)  \notag \\
  &\equiv \frac{f_{4}^{3i+1}}{f_{2}^{6i+2} f_1} \varphi(q^3) \frac{f_1 f_4}{f_2^{2}} \notag \\
   &\equiv \frac{f_{12}^{i}}{f_{6}^{2i+1} } \varphi(q^3) \frac{ f_4^{2}}{f_2} \notag \\
    &\equiv \frac{f_{12}^{i}}{f_{6}^{2i+1} } \varphi(q^3) \psi(q^2) \notag \\
    &\equiv \frac{f_{12}^{i}}{f_{6}^{2i+1} } \varphi(q^3) \bigg(  f(q^6,q^{12})+q^2\psi(q^{18})\bigg) \pmod3.
 \end{align}
 	There are no terms of the  form $q^{3n+1}$ in \eqref{p3eq5}.  As a result 
 		\begin{align}\label{p3eq6}
 	\sum_{n\geq 0}\bar a_{9i+5}(9n+3)q^n \equiv 0	 \pmod3.
 	\end{align}
 	Since $9n + 3$ can never be a  square or twice a square for any non-negative integer $n$ from Theorem \ref{p3thm1} we can say $\sum_{n\geq 0}\bar a_{9i+5}(9n+3)q^n \equiv 0	 \pmod4$ and further by using \eqref{p3eq6} we complete the proof.
\end{proof}
\begin{proof}[Proof of Equation \eqref{p3cong3}]
	For $c=9i+8$, we have
	\begin{align}\label{p3eq9}
	\sum_{n\geq 0}\bar a_{9i+8}(n)q^n&=\frac{f_{4}^{9i+7}}{f_1^2f_{2}^{18i+13}} 
	\equiv \frac{f_{12}^{3i+2}}{f_{6}^{6i+4} f_3} \frac{f_1 f_4}{f_2} \pmod3.\notag \\
	\end{align}
Following an argument similar to the proof of \eqref{p3cong2}, one can easily complete the proof.
\end{proof}

\section{Acknowledgement} 
We sincerely thank Prof. Ken Ono for his valuable comments on the initial draft of this paper. Suparno Ghoshal was supported by the Deutsche Forschungsgemeinschaft (DFG, German Research Foundation) under Germany's Excellence Strategy - EXC 2092 CASA - 390781972.
\section{Conflict of interest}
On behalf of all authors, the corresponding author states that there is no conflict of interest.

\end{document}